\theoremstyle{plain}
\newtheorem{theorem}{Theorem}
\newtheorem{lemma}[theorem]{Lemma}
\newtheorem{proposition}[theorem]{Proposition}
\theoremstyle{definition}
\newtheorem{remark}[theorem]{Remark}
\newcommand{\diff}{\mathrm{d}}
\DeclarePairedDelimiterX\braket[2]{[\,}{\,]}{#1 \,\delimsize\vert\, #2}
\DeclarePairedDelimiterX\scalar[2]{\langle}{\rangle}{#1, #2}
\providecommand{\keywords}[1]
{
  \small	
  \textit{Keywords:} #1
}
\providecommand{\AMSid}[1]
{
  \small	
  \textit{2020 Mathematics Subject Classification:} #1
}
\begin{document}

\title{On the law of terminal value of additive martingales in a remarkable branching stable process}

\author{Hairuo Yang \footnote{Institute of Mathematics, University of Z\"{u}rich, Switzerland}}
\date{\today}

\maketitle
\begin{abstract}
    We give an explicit description of the law of terminal value $W$ of additive martingales in a remarkable branching stable process. We show that the right tail probability of the terminal value decays exponentially fast and the left tail probability follows that $-\log \mathbb{P}(W<x) \sim \frac{1}{2} (\log x)^2$ as $x \rightarrow 0+$. These are in sharp contrast with results in the literature such as Liu \cite{Liu00, Liu01} and Buraczewski \cite{B09}. We further show that the law of $W$ is self-decomposable, and therefore, possesses a unimodal density. We specify the asymptotic behavior at $0$ and at $+\infty$ of the latter. 
\end{abstract}
\keywords{Branching random walk; additive martingale; branching stable processes}

\AMSid{Primary 60G44; 60J80}

\section{Introduction} \label{se1}
We consider a branching random walk $(\mathbf{Z}_n)_{n \geqslant 0}$ on $\mathbb{R}^d$. At time $0$, the process starts with an ancestor that lives at the origin. At time $1$, the ancestor dies and simultaneously gives birth to children, which form the first generation of this process and whose positions are given by the atoms of some point process $\mathcal{Z}$. For each integer $n \geqslant 1$, every individual in the $n$th generation gives birth, independently of each other, to its own offspring that form the $(n+1)$th generation. The displacements of its children from this individual's position are given by the atoms of an independent copy of $\mathcal{Z}$. The atoms of point process $\mathbf{Z}_n$ are given by the locations of  individuals from the $n$th generation.

The Laplace transform of the intensity measure of $\mathbf{Z}_1$ is denoted by $m$, so that 
$$
m(\theta)=\mathbb{E}\left(  \int_{\mathbb{R}^d} e^{-\langle\theta, x\rangle}\mathbf{Z}_1(dx)  \right)
$$  
where $\theta \in \mathbb{R}^d$ and $\langle \cdot, \cdot \rangle$ denotes the scalar product on $\mathbb{R}^d$. Throughout this paper we consider only those $\theta$ for which $m(\theta) < \infty$. We also restrict our attention to supercritical branching random walks. In other words, we assume the expected number of children of a parent $\mathbb{E}(\# \mathbf{Z}_1)$ is strictly greater than $1$. The additive martingale $\left(W_n(\theta)\right)_{n \geqslant 0}$ defined by
$$
W_n(\theta)=m(\theta)^{-n} \int_{\mathbb{R}^d} e^{-\langle\theta, x\rangle}\mathbf{Z}_n(dx)
$$
is known to be a nonnegative martingale that converges almost surely to its \textit{terminal value} $W(\theta)$. The celebrated Biggins martingale convergence theorem in \cite{Biggins77} derived necessary and sufficient conditions for the terminal value to be non-degenerate. Specifically, 
under the assumption that $m(\theta)$ is finite, 
the terminal value $W(\theta)$ has expectation $1$, or equivalently, $\left(W_n(\theta)\right)_{n \geqslant 0}$ converges to $W(\theta)$ in $L^1$ if and only if 
\begin{align} \label{eq:nondegenerate}
\frac{\left\langle \theta,  \nabla m(\theta) \right\rangle}{m(\theta)} < \log m(\theta) \quad \textrm{and} \quad \mathbb{E} \left( W_1(\theta) \log^+\left(W_1(\theta)\right) \right) <\infty,
\end{align}
where $\nabla m(\theta)$ is the gradient of $m$ at $\theta$ and $\log^+(x) = \max( \log x, 0)$. 
This result was reproved under a slightly weaker condition in Lyons \cite{Lyons97} by a remarkable change of measures argument. The terminal values of additive martingales play a key role in the study of branching random walks. For example, Theorem 4 in Biggins \cite{Biggins92} described the spread of the $n$th generation in a branching random walk in terms of the terminal values of its additive martingales.

The law of the terminal value of an additive martingale is known explicitly in only very few special cases.  For example, in case of the standard Yule process, the terminal value of the corresponding additive martingale follows standard exponential distribution (cf. Remark \ref{rk:Yule} where we adapt the proof of our main result to give an easy proof of this statement). Most efforts have been devoted to gain insight on general properties of the law of terminal value of an additive martingale. Among those results are \cite{Liu00} regarding the existence of moments,  \cite{Liu01} regarding the support of distribution, \cite{BG79, Liu01} regarding the absolute continuity, \cite{Liu01} regarding the left tail probability. Precise estimates on the right tail probability of $W(\theta)$ have been derived by Liu \cite{Liu00} and Buraczewski \cite{B09}. More specifically, by Theorem 2.2 in \cite{Liu00}, if (\ref{eq:nondegenerate}) is satisfied, there exists some $\beta > 1$ such that  
\begin{align} \label{eq:Liu}
m(\beta\theta) = m^{\beta}(\theta) < \infty
\end{align}
and some further hypotheses are satisfied,
then $W(\theta)$ has power-law right tail such that
\begin{align} \label{eq:Liu2}
\mathbb{P}( W(\theta) > x ) \sim a {x^{-\beta}} \quad \textrm{as} \quad x \rightarrow \infty
\end{align}
for some $a \in (0, \infty)$. The study of terminal value $W(\theta)$ typically  starts with the observation that 
\begin{align} \label{smoothing}
    W(\theta) \stackrel{(\textrm{d})}{=} \frac{1}{m(\theta)} \sum_{ \textrm{$x$ atom of $\mathbf{Z}_1$} } e^{-\langle \theta, x \rangle} W^{(x)}(\theta)
\end{align}
where $\{W^{(x)}(\theta): x \textrm{ atom of }\mathbf{Z}_1\}$ is a family of i.i.d. copies of $W(\theta)$ that are independent of $\mathbf{Z}_1$ and $\stackrel{(\textrm{d})}{=}$ means equality in distribution. This implies that the law of $W(\theta)$ is a fixed point of a smoothing transform and properties of $W(\theta)$ can be obtained by studying the fixed points of smoothing transforms. Fix a sequence of $\mathbb{R}_+$-valued random variables $T_1, T_2, \ldots$ and a random variable $N$ that takes values in $\mathbb{Z}_+$. For any nonnegative random variable $X$ with its law denoted by $\mathcal{L}(X)$, let $X_1, X_2, \ldots$ be i.i.d copies of $X$ such that $(X_1, X_2, \ldots)$ is independent of $(N, T_1, T_2, \ldots)$. The smoothing transform associated to $(N, T_1, T_2, \ldots)$ is the mapping $\mathcal{L}(X) \mapsto \mathcal{L}\left(\sum_{ i \leq N} T_i X_i\right)$ and the fixed point of the above smoothing transform is the distribution of a random variable $X$ that satisfies relation
\begin{align*}
X \stackrel{(\textrm{d})}{=} \sum_{ i \leq N} T_i X_i.
\end{align*}
There is an extensive literature investigating fixed points of a smoothing transform. The problem of existence of solutions
was solved in Durrett and Liggett \cite{RL83} for deterministic $N$. Their results were generalised by Liu \cite{Liu98} to the case
where $N$ is random. Biggins and Kyprianou \cite{BK97, BK03} considered the problem of uniqueness and gave characterisation of the fixed points. In particular, under the assumption that (\ref{eq:nondegenerate}) holds, Theorem 1.5 in \cite{BK97} implies that there is only one nontrivial solution $W(\theta)$ to the equation (\ref{smoothing}) that has expectation $1$. The above equality (\ref{smoothing}) also implies that the law of $W(\theta)$ solves a more general fixed-point distributional equation that is of the form 
\begin{align} \label{eq:fixedpoint}
X \stackrel{(\textrm{d})}{=} AX + B
\end{align}
where $A, B$ are random variables independent of $X$.
We refer the reader to Buraczewski et al. \cite{BDM16} for further references on this equation and the more general stochastic recursion equations.

The purpose of this paper is to present an interesting and non-trivial case where fairly explicit results can be obtained regarding the laws of terminal values of additive martingales. We consider branching random walk $(\mathbf{Z}_n)_{n \geqslant 0}$ on $\mathbb{R}^2_+$ started from the origin where $\mathbf{Z}_1$ is a Poisson point process with intensity measure given by the Lebesgue measure on the upper-right quadrant. By the non-degenerate condition (\ref{eq:nondegenerate}), the associated additive martingale $\left(W_n(\theta)\right)_{n\geqslant 0}$ converges in $L^1$ to its terminal value $W(\theta)$ if and only if $\theta = (\theta_1, \theta_2) \in \mathbb{R}_+^2$ is such that $\log \theta_1 + \log \theta_2 < 2$. Note that $W((c, c^{-1}))$ is equal in distribution to $W\left( (1,1) \right)$ for any $c>0$ since Poisson point process $\mathbf{Z}_1$ is invariant under map $T$ defined on $\mathbb{R}^2_+$ by $T(x,  y) = (cx, c^{-1}y)$. In the following, we denote by $W = W\left((1,1) \right)$ the terminal value of additive martingale $\left( W_n\left((1,1)\right) \right)_{n\geqslant 0}$ associated to branching random walk $( \mathbf{Z}_n)_{n\geqslant 0}$.

Our main result derives an explicit formula for the law of $W$. We introduce the moment-generating function
\begin{align*}
    \phi(s) = \mathbb{E} (e^{sW}) 
\end{align*}
for each $s \in \mathbb{R}$ such that the above expectation is finite. We also introduce a constant
$$
r^{*} = \exp \left( \int_0^{\infty} \left({\left(2(e^u-u-1)\right)}^{-1/2}-u^{-1} \mathbbm{1}_{\{u<1\}}\right)\diff u \right)
$$ 
and a real-valued function $F: (1, \infty) \rightarrow (0, \infty)$ by
\begin{align*}
    F(y)=\int_{\log y}^{\infty} \frac{\diff u}{\sqrt{2(e^{u}-u-1)}}, \quad y>1.
\end{align*} One can show that the constant $r^* \in (0, \infty)$ by series expansion of $e^u - u -1.$
Furthermore, function $F$ is a strictly decreasing bijection. We denote its inverse function by $F^{-1}$. We adopt the usual notation $f(x) \sim g(x)$, $x \rightarrow x_0$ to denote the asymptotic equivalence that $\lim_{x \rightarrow x_0} f(x)/g(x) = 1$.

\begin{theorem} \label{theorem:simple}
(i) The moment-generating function $\phi$ of the terminal value $W$ is given by
\begin{align*}
\phi(r)=F^{-1}\left(\log r^{*}-\log r \right), \quad \forall\, 0<r<r^{*};
\end{align*}
the explosion point of $\phi$ is equal to $r^*$, that is,
\begin{align*}
 \sup \{r \in \mathbb{R} : \phi( r ) < \infty \} =  r^{*}.
\end{align*}
(ii) As a consequence, the right tail probability of $W$ decays exponentially with rate $r^*$, that is, 
$$
- \log \mathbb{P}(W>x) \sim r^{*}x \quad \textrm{as} \quad x \rightarrow \infty.
$$
(iii) The left tail probability of $W$ satisfies that
$$
- \log \mathbb{P}(W < x) \sim \frac{1}{2} (\log x)^2 \quad \textrm{as} \quad x \rightarrow 0\hspace{-2pt}+.
$$
\end{theorem}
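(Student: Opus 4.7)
My plan starts from the smoothing equation \eqref{smoothing}: since $m((1,1)) = \int\!\int_{\mathbb{R}_+^2} e^{-x_1 - x_2}\,dx_1 dx_2 = 1$, it reads
\begin{align*}
W \stackrel{(\mathrm{d})}{=} \sum_i V_i W^{(i)}, \qquad V_i := e^{-X_i^{(1)} - X_i^{(2)}},
\end{align*}
where $(X_i^{(1)}, X_i^{(2)})$ are the atoms of $\mathbf{Z}_1$ and the $W^{(i)}$ are i.i.d.\ copies of $W$. Pushing Lebesgue measure on $\mathbb{R}_+^2$ through $(x_1, x_2) \mapsto e^{-x_1-x_2}$ (the slice $\{x_1 + x_2 = u\}$ has length $u$, and then the exponential change of variables) identifies $\{V_i\}$ as a Poisson point process on $(0,1)$ with intensity $\mu(dv) = (-\log v)/v\,dv$. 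Taking the Laplace functional yields the integral equation
\begin{align*}
\log \phi(r) = \int_0^1 (\phi(rv) - 1) \frac{-\log v}{v}\,dv, \qquad 0 < r < r^*.
\end{align*}

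For (i), I would convert this into an autonomous ODE. Setting $\psi := \log \phi$ and differentiating twice in $r$ (after the substitution $s = rv$) gives $r^2\psi''(r) + r\psi'(r) = \phi(r) - 1$, which in the variable $t := \log r$ reads $\ddot{\psi} = e^{\psi} - 1$, with first integral $\tfrac{1}{2}\dot\psi^2 = e^\psi - \psi + C$. The boundary conditions $\psi \to 0$ and $\dot\psi = r\psi'(r) \to 0$ as $t \to -\infty$ (the latter via $\psi'(0) = \mathbb{E}(W) = 1$) force $C = -1$ and select the positive root $\dot\psi = \sqrt{2(e^\psi - \psi - 1)}$. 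Separating variables gives $-\frac{d}{dt} F(e^\psi) = 1$, so $F(\phi(r)) = C' - \log r$. The constant $C'$ is pinned by asymptotic matching at $r \to 0^+$: splitting $F(\phi(r)) = \int_{\psi(r)}^\infty du/\sqrt{2(e^u - u - 1)}$ at $u=1$ and inserting the regulator $1/u$ yields $F(\phi(r)) = -\log \psi(r) + \log r^* + o(1)$; combined with $\psi(r) \sim r$, this forces $C' = \log r^*$. The explosion point is then exactly $r^*$ because the domain of $F^{-1}$ is $(0, \infty)$.

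For (ii), Chernoff's inequality with $r \uparrow r^*$ yields $\liminf_{x\to\infty} -\log \mathbb{P}(W>x)/x \geq r^*$. The reverse inequality is obtained from the explicit singularity: since $e^u - u - 1 \sim e^u$ at infinity gives $F(y) \sim \sqrt{2/y}$ and $F^{-1}(\tau) \sim 2/\tau^2$, we have $\phi(r) \sim 2(r^*)^2/(r^* - r)^2$ as $r \to r^{*-}$, i.e.\ a genuine double pole at $r^*$; an exponential Tauberian theorem then yields $\mathbb{P}(W > x) \asymp x e^{-r^* x}$, hence $-\log\mathbb{P}(W>x) \sim r^* x$. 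For (iii), the analogous ODE for $\eta(s) := -\log \mathbb{E}(e^{-sW})$, $s > 0$, is $\ddot{\eta} = 1 - e^{-\eta}$ with first integral $\tfrac{1}{2}\dot\eta^2 = \eta + e^{-\eta} - 1$ (constant fixed by $\eta, \dot\eta \to 0$ as $s \to 0^+$). As $t = \log s \to \infty$, $\dot\eta \sim \sqrt{2\eta}$, so $\eta(s) \sim (\log s)^2/2$; de Bruijn's Tauberian theorem (equivalently the saddle-point computation $\inf_{x>0}(sx + \tfrac{1}{2}(\log(1/x))^2) \sim \tfrac{1}{2}(\log s)^2$ via the Lambert $W$) then translates this into $-\log \mathbb{P}(W<x) \sim \tfrac{1}{2}(\log x)^2$ as $x \to 0^+$.

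The main obstacle I anticipate is identifying $C' = \log r^*$ in (i): direct integration of the autonomous ODE determines $\phi$ only up to a translation in $t = \log r$, and fixing the exact constant requires the asymptotic matching at $r = 0$ that uses precisely the regularization $u^{-1}\mathbbm{1}_{\{u<1\}}$ in the definition of $r^*$ together with $\mathbb{E}(W) = 1$. The Tauberian inversions in (ii) and (iii) are routine in spirit once the singularities have been extracted; in (ii), however, upgrading the Chernoff liminf to a matching limsup may require some additional regularity input, for which the self-decomposability of $W$ announced in the abstract is a natural tool.
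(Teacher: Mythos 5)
Your plan reproduces the paper's own route almost exactly: the projected Poisson intensity $(-\log v)v^{-1}\,dv$ gives the same functional equation as (\ref{eq3}) with $\alpha=1$; double differentiation gives the same equation $r(r\psi'(r))'=e^{\psi(r)}-1$, the same first integral (\ref{separationofvariables2}), the same separation of variables, and the same identification of the constant via $\mathbb{E}(W)=1$ and the regularized integral defining $r^*$; the double-pole asymptotics $\phi(r)\sim 2(r^*)^2/(r^*-r)^2$ and an exponential Tauberian step for (ii), and the Laplace-transform ODE for (iii), are also the paper's. The only substantive difference is cosmetic (the paper derives the ODE by comparing moment/cumulant series rather than by your change of variables), plus your use of de Bruijn/saddle-point in (iii) where the paper cites Bingham--Teugels together with infinite divisibility of the law of $W$ (Remark \ref{remark:infinitelydivisable}).

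There is, however, one genuine gap. Everything rests on the equation $\log\phi(r)=\int_0^1(\phi(rv)-1)(-\log v)v^{-1}dv$ holding on a nondegenerate interval $0<r<\rho$ with $\rho>0$, i.e.\ on $W$ possessing some finite exponential moment. You assume this from the outset (you even write the equation ``for $0<r<r^*$''), but it is exactly the nontrivial, model-specific input: for generic branching random walks the martingale limit has only power-law tails (the Liu and Buraczewski results quoted in the introduction), and if $\phi(r)=+\infty$ for every $r>0$ the Campbell identity holds trivially with both sides infinite, there is no ODE to integrate, and the matching at $r\to 0+$ that pins $C'=\log r^*$ has nothing to match. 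The paper supplies this in Lemma \ref{lemma:finiteexponentialmoment} by computing $\mathbb{E}(e^{rW_1})=\exp\bigl(\sum_{k\geqslant 1}r^k/(k!\,k^{2})\bigr)<\infty$ for all $r$ and invoking Theorem 2.3 of Liu to get that the explosion point is positive (its finiteness you do recover afterwards from the explicit formula, but its positivity you cannot). Two smaller cautions: in (ii), your claim $\mathbb{P}(W>x)\asymp x e^{-r^*x}$ is stronger than what the Tauberian machinery as used in the paper (regular variation of $e^{r^*x}\mathbb{P}(W\in dx)$ plus BGT Theorem 4.12.10) actually delivers or than what is needed — the matching upper bound on $-\log\mathbb{P}(W>x)$ comes from that machinery, not from self-decomposability; and in (iii), de Bruijn's theorem is stated for regularly varying rates of positive index, so in the slowly varying regime $\tfrac12(\log x)^2$ you should either run the elementary two-sided exponential Chebyshev argument you sketch in parentheses (which does close the argument) or follow the paper's appeal to Bingham--Teugels plus infinite divisibility.
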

We remark that the above tail probabilities are in sharp contrast with (\ref{eq:Liu2}) and the results obtained by Liu \cite{Liu00, Liu01} and Buraczewski \cite{B09}. In fact, for branching random walk $\{\mathbf{Z}_n\}_{n\geqslant 0}$, we have $m((\theta_1, \theta_2)) = (\theta_1 \theta_2)^{-1}$; thus, equation (\ref{eq:Liu}) fails for any $\beta>1$.

The proof of Theorem \ref{theorem:simple} replies on distributional equation (\ref{smoothing}) in its functional form 
\begin{align*} \label{smoothing:functional}
\phi(r)=\mathbb{E}\left( \prod_{\textrm{$(t,x)$ atom of $\mathbf{Z}_1$}} \phi(re^{-t-x})   \right).
\end{align*}
By construction, $\mathbf{Z}_1$ is a Poisson point process with explicitly known intensity measure. This enables us to transfer the above functional equation to a differential equation that has a unique solution. 

The branching random walk $(\mathbf{Z}_n)_{n \geqslant 0}$ considered in Theorem \ref{theorem:simple} is closely related to branching-stable processes. Recently introduced by Bertoin et al. \cite{BCM18}, branching-stable processes form a family of continuous-time branching processes that possess a self-similar property. For example, for every $t\geqslant 0$, let $\mathbf{S}_t$ be the point process on $\mathbb{R}_+$ given by
\begin{align*} \label{eq:selfsimilarity}
 \mathbf{S}_t ([0,x]) = \sum_{ n \geqslant 0}  \mathbf{Z}_n( [0,t] \times [0,x] ), \quad \forall\, x \geqslant 0.
\end{align*}
Then $(\mathbf{S}_t)_{t \geqslant 0}$ is a branching-stable process with stable index $-1$. More specifically, it is a continuous-time branching process that satisfies the distributional identity that
$$
(\mathbf{S}_{ct})_{t\geqslant 0} \stackrel{(\textrm{d})}{=}  (c^{-1}\mathbf{S}_t)_{t\geqslant0}, \quad \forall\, c>0,
$$
where $c^{-1}\, \mathbf{S}_t$ denotes the point process that re-scales the locations of atoms of $\mathbf{S}_t$ by constant $c^{-1}$. For any $\theta >0$, the additive martingale $(V_t(\theta))_{t \geqslant 0}$ associated to $(\mathbf{S}_t)_{t\geqslant 0}$ defined by
$$
V_t(\theta) = \int_0^{\infty} e^{-\theta x} \mathbf{S}_t (dx)\Big/\mathbb{E} \left(\int_0^{\infty} e^{-\theta x} \mathbf{S}_t (dx) \right) 
$$
converges almost surely to its terminal value $V(\theta)$. The convergence also holds in $L^2$ sense and $\mathbb{E}(V(\theta)) = 1$ for any $\theta>0$. This is due to a version of Biggins' additive martingale convergence theorem that was proved by Bertoin and Mallein \cite{BM18} for branching-stable processes and the more general branching Lévy processes. Their results were later completed for $L^p$ convergence by Iksanov and Mallein \cite{IM19}.  Furthermore, Corollary 3.7 in \cite{BCM18} obtained a precise description of asymptotic behaviors of a branching-stable process in terms of the terminal values of its additive martingales. For the specific process $(\mathbf{S}_t)_{t\geqslant 0}$, the last result states that, for any function $f: \mathbb{R} \rightarrow \mathbb{R}$ that is directly Riemann integrable with compact support, 
 $$
 \lim_{t \rightarrow \infty} \sqrt{t} e^{-2 \theta^{-1} t} \int_{0}^{\infty} f(y - \theta^{-2} t ) \mathbf{S}_t(dy) = \frac{V(\theta)}{\sqrt{4\pi \theta^{-3} }} \int_{-\infty}^{\infty} f(y) e^{ \theta y} dy
 $$
 where the limit is uniform for $\theta$ in compact subsets of $(0,\infty)$, almost surely. This also highlights the importance of studying the law of terminal value of an additive martingale.
 
 Due to the close relationship between $(\mathbf{S}_t)_{t\geqslant 0}$ and $(\mathbf{Z}_n)_{n\geqslant 0}$, terminal value $V(\theta)$ has the same law as $W$ for every $\theta>0$. In fact, we will show in Proposition \ref{prop:coincide} that, for a more general class of branching random walks and branching-stable processes, the terminal values of the associated additive martingales coincide almost surely. As a direct corollary, our main result regarding branching random walk $(\mathbf{Z}_n)_{n \geqslant 0}$ also gives an explicit description and asymptotic properties of the law of terminal values $V(\theta)$ of the additive martingales associated to branching-stable process $(\mathbf{S}_t)_{t \geqslant 0}$.

The stationarity of the Poisson point process $\mathbf{Z}_1$ also induces the self-decomposability of the law of $W$. Recall that, the law of some random variable $U$ is called self-decomposable if, for every $s>0$, there exists some random variable $U_s$ independent of $U$ such that
$$
U \stackrel{(\textrm{d})}{=} e^{-s} U + U_s.
$$
We refer the reader to Sato \cite{Sato1999} for detailed information on self-decomposable laws and remark that in Pakes \cite{P20}, self-decomposability also appears in the study of martingale limits of continuous-state branching processes. Note that the above distributional equation is a special case of the fixed-point distributional equation (\ref{eq:fixedpoint}). We will show in Proposition \ref{prop:sd} that, for general branching-stable processes and the corresponding branching random walks, the laws of the terminal values of their additive martingales are self-decomposable. 
As a consequence of the self-decomposability, the law of $W$ is absolutely continuous with unimodal density $f$. Recall that a probability density function $h: \mathbb{R} \rightarrow \mathbb{R}$ is called unimodal if there exists some real number $x_0$ such that $h$ is non-decreasing on $(-\infty, x_0)$ and non-increasing on $(x_0, +\infty)$. This allows us to upgrade results from Theorem \ref{theorem:simple} to the following that gives a precise description of asymptotic behaviors of the density $f$.
\begin{theorem} \label{prop:density}
The law of $W$ is self-decomposable and admits unimodal density function $f$ such that
\begin{align*}
- \log f(x) \sim r^{*}x \quad \textrm{as} \quad x \rightarrow \infty \quad \textrm{and}
\quad - \log f(x) \sim \frac{1}{2} (\log x)^2 \quad \textrm{as} \quad  x \rightarrow 0\hspace{-3pt}+.
\end{align*}
\end{theorem}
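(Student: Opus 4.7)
The plan is to first establish self-decomposability and a unimodal density, then to transfer the two tail asymptotics of Theorem~\ref{theorem:simple} into pointwise asymptotics of $f$ by simple sandwich inequalities based on the monotonicity.

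By Proposition~\ref{prop:sd} (proved earlier in the paper for the general branching-stable setting), the law of $W$ is self-decomposable. Yamazato's classical theorem (Theorem~53.1 in Sato \cite{Sato1999}) then guarantees that $W$ admits a unimodal density $f$, so that there exists a mode $x_0 \in (0, \infty)$ with $f$ non-decreasing on $(0, x_0)$ and non-increasing on $(x_0, \infty)$.

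For the behavior at $0+$, fix $x$ with $0 < 2x < x_0$. Monotonicity on $(0, x_0)$ yields $\int_0^x f(y)\,dy \leq xf(x)$ and $\int_x^{2x} f(y)\,dy \geq xf(x)$, hence
\begin{equation*}
\frac{\mathbb{P}(W<x)}{x} \;\leq\; f(x) \;\leq\; \frac{\mathbb{P}(W<2x)}{x}.
\end{equation*}
Taking $-\log$ and applying Theorem~\ref{theorem:simple}(iii), together with $|\log x| = o\bigl((\log x)^2\bigr)$ and $\bigl(\log(2x)\bigr)^2 \sim (\log x)^2$ as $x \to 0+$, both inequalities yield $-\log f(x) \sim \tfrac{1}{2}(\log x)^2$.

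For the behavior at $\infty$, fix $x > 2x_0$. Monotonicity on $(x_0, \infty)$ yields $\int_{x-1}^x f(y)\,dy \geq f(x)$ and $\int_x^{2x} f(y)\,dy \leq xf(x)$, hence
\begin{equation*}
\frac{\mathbb{P}(W>x) - \mathbb{P}(W>2x)}{x} \;\leq\; f(x) \;\leq\; \mathbb{P}(W>x-1).
\end{equation*}
Since $-\log \mathbb{P}(W>x) \sim r^* x$ with $r^*>0$, we have $\mathbb{P}(W>2x)/\mathbb{P}(W>x) \to 0$ and therefore $\mathbb{P}(W>x) - \mathbb{P}(W>2x) \sim \mathbb{P}(W>x)$; both sides of the sandwich then yield $-\log f(x) \sim r^* x$ via Theorem~\ref{theorem:simple}(ii). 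The real work of the theorem is therefore upstream: establishing self-decomposability in Proposition~\ref{prop:sd} and securing the precise tail estimates in Theorem~\ref{theorem:simple}. Once those are in place, the density asymptotics follow almost mechanically from unimodality, so there is no single hard step in the plan beyond what has already been invested in the earlier results.
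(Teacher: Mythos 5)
Your proposal is correct and follows the same overall skeleton as the paper: self-decomposability and unimodality come from Proposition \ref{prop:sd} (via Sato's Theorem 53.1), and the density asymptotics are then extracted from Theorem \ref{theorem:simple} using monotonicity. At $0+$ your sandwich is essentially identical to the paper's (the paper compares $xf(x)$ with $\int_0^x f$ and $\int_x^{cx}f$ for a general $c>1$; you take $c=2$). The genuine difference is at infinity: the paper cites Theorem 4.12.10(i) of Bingham et al.\ \cite{BGT87} to pass from $-\log\mathbb{P}(W>x)\sim r^*x$ to $-\log f(x)\sim r^*x$ for a monotone density, whereas you replace that citation by the elementary two-sided bound $\bigl(\mathbb{P}(W>x)-\mathbb{P}(W>2x)\bigr)/x\le f(x)\le\mathbb{P}(W>x-1)$, which works precisely because the exponential decay makes $\mathbb{P}(W>2x)$ negligible relative to $\mathbb{P}(W>x)$; this buys a self-contained argument at no real cost, while the paper's route is shorter on the page. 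Two small points to tidy: requiring $x>2x_0$ does not guarantee $x-1>x_0$ when $x_0<1$, so state the bound for $x>x_0+1$ (immaterial for the asymptotics); and Yamazato's theorem by itself does not place the mode in $(0,\infty)$ --- though a mode at $0$ is easily excluded, since a density non-increasing on $(0,\infty)$ would give $\mathbb{P}(W<x)\ge x f(a)$ for any fixed $a$ with $f(a)>0$, contradicting the superpolynomial left-tail decay of Theorem \ref{theorem:simple}(iii). The paper's proof makes the same implicit assumption, so this is a shared, easily repaired omission rather than a gap particular to your argument.
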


The remaining sections of this paper are organised as follows. Section \ref{se2} starts with Lemma \ref{lemma:finiteexponentialmoment} that shows the terminal values of additive martingales of some branching random walks admit some finite exponential moments. Then we present the proof of Theorem \ref{theorem:simple}. In Section \ref{se3}, we first review some background on branching-stable processes. Then we show in Proposition \ref{prop:coincide} that the terminal value of additive martingales associated to some branching-stable process is equal to that associated to some branching random walk. After that, we show in Proposition \ref{prop:sd} that the terminal values of additive martingales associated to some branching stable processes follow self-decomposable laws. Finally, we obtain asymptotic behaviors of the density of the terminal value $W$ which is Theorem \ref{prop:density}.

\section{Proofs} \label{se2}

We begin with a statement for slightly more general branching random walks. For each $\alpha>0$, let $( \mathbf{Z}^{(\alpha)}_n )_{n \geqslant 0}$ be a branching random walk on $\mathbb{R}_+^2$ started from the origin such that $\mathbf{Z}_1^{(\alpha)}$ is a Poisson point process on $\mathbb{R}^2_+$ with intensity measure given by 
$$
\Gamma{(\alpha)}^{-1} \mathbf{1}_{ \{x >0, t >0 \} } x^{\alpha-1}dxdt
$$
where $dxdt$ denotes the Lebesgue measure on $\mathbb{R}^2_+$ and $\Gamma$ denotes the usual Gamma function. Note that taking $\alpha = 1$ would reduce our branching random walk $(\mathbf{Z}_n^{(\alpha)})_{n \geqslant 0}$ to the case $(\mathbf{Z}_n)_{n \geqslant 0}$ considered in Theorem \ref{theorem:simple}. Let $W^{(\alpha)}$ denote the terminal value of the additive martingale $\{W_n^{(\alpha)}\}_{n\geqslant 0}$ defined by
$$
W_n^{(\alpha)} = \int_0^{\infty}\int_0^{\infty} e^{-t-x} \mathbf{Z}^{(\alpha)}_n(dt, dx).
$$
By (\ref{eq:nondegenerate}), we know that $W_n^{(\alpha)}$ converges almost surely and in $L^1$ to its terminal value $W^{(\alpha)}$ and $\mathbb{E}\left(W^{(\alpha)}\right) = 1$. 

\begin{lemma}\label{lemma:finiteexponentialmoment}
The moment-generating function $\phi^{(\alpha)}$ of $W^{(\alpha)}$ is given by the unique solution of equation 
\begin{align} \label{eq3}
    \log \phi^{(\alpha)}(r)=\Gamma(\alpha)^{-1} \int_{\mathbb{R}_+^2} \left(\phi^{(\alpha)}(re^{-t-x})-1\right) x^{\alpha-1}  \diff x\diff t , \quad r < r^{(\alpha)},
\end{align}
where the explosion point $r^{(\alpha)}$ of $\phi^{(\alpha)}$ defined by
$$
r^{(\alpha)}= \sup \{ r \in \mathbb{R}: \phi^{(\alpha)}(r) < \infty \}
$$
is a finite positive number. Furthermore, the $k$-th moment $\mu_k^{(\alpha)}$ of $W^{(\alpha)}$ satisfies a recursive relation
\begin{align*}
\mu_k^{(\alpha)}=\left(1-\frac{1}{k!k^{1+\alpha}}\right)^{-1} \sum_{m=1}^{k-1}{k-1 \choose m-1} \frac{\mu^{(\alpha)}_m \mu^{(\alpha)}_{k-m}}{m!m^{1+\alpha}}, \quad \mu^{(\alpha)}_1=1.
\end{align*}
that characterises the distribution of $W^{(\alpha)}$.
\end{lemma}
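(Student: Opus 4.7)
My plan is to derive (\ref{eq3}) directly from the branching smoothing equation (\ref{smoothing}), and then extract the moment recursion, the uniqueness of $\phi^{(\alpha)}$ as a solution, and the control on the explosion radius from this single identity.

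The first step is standard Poisson calculus. Applied at $\theta = (1,1)$, equation (\ref{smoothing}) gives an i.i.d.\ fixed-point relation for $W^{(\alpha)}$ driven by the atoms of $\mathbf{Z}_1^{(\alpha)}$. Conditioning on $\mathbf{Z}_1^{(\alpha)}$ and applying the Campbell--Laplace formula for Poisson point processes yields
\begin{align*}
\phi^{(\alpha)}(r) = \mathbb{E}\Bigl[\prod_{(t,x)\ \text{atom of}\ \mathbf{Z}_1^{(\alpha)}} \phi^{(\alpha)}(re^{-t-x})\Bigr] = \exp\Bigl(\Gamma(\alpha)^{-1}\int_{\mathbb{R}_+^2} (\phi^{(\alpha)}(re^{-t-x})-1) x^{\alpha-1} \diff x \diff t\Bigr)
\end{align*}
for $r < r^{(\alpha)}$, which is (\ref{eq3}) upon taking logarithms. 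The moment recursion then falls out by comparing Taylor coefficients at $r = 0$: the key computation is $\int_{\mathbb{R}_+^2} e^{-k(t+x)} x^{\alpha-1} \diff x \diff t = \Gamma(\alpha)/k^{\alpha+1}$, so substituting $\phi^{(\alpha)}(y) - 1 = \sum_{k\geq 1} \mu_k^{(\alpha)} y^k/k!$ into the right-hand side identifies the $k$-th cumulant of $W^{(\alpha)}$ as $\kappa_k^{(\alpha)} = \mu_k^{(\alpha)}/k^{\alpha+1}$. The classical moment--cumulant identity $\mu_n^{(\alpha)} = \sum_{k=1}^n \binom{n-1}{k-1} \kappa_k^{(\alpha)} \mu_{n-k}^{(\alpha)}$, after isolating the $k=n$ term and a small algebraic rearrangement, yields the stated recursion. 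This simultaneously shows inductively that every moment $\mu_k^{(\alpha)}$ is finite and uniquely determined by $\mu_1^{(\alpha)} = 1$, so that any analytic solution of (\ref{eq3}) with value $1$ at the origin must coincide with $\phi^{(\alpha)}$, proving the uniqueness claim.

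It remains to prove $0 < r^{(\alpha)} < \infty$. For positivity, I plan to bound the normalised moments $\nu_n := \mu_n^{(\alpha)}/n!$ by an exponential: rewriting the recursion in the variables $\nu_n$ produces an inequality of the shape $\nu_n \leq (C/n) \sum_{k=1}^{n-1} \nu_k \nu_{n-k}/k^\alpha$ with $C$ depending only on $\alpha$, and an induction using the at-most-polynomial growth of $\sum_{k < n} k^{-\alpha}$ shows $\nu_n \leq AB^n$ for some $A, B > 0$, whence $\phi^{(\alpha)}$ has radius of convergence at least $1/B$. The finiteness of $r^{(\alpha)}$ is the main obstacle; to prove it I would analyse (\ref{eq3}) near its explosion point via the change of variables $u = \log r$ and $\tilde h(u) = \log \phi^{(\alpha)}(e^u)$. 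When $\alpha = 1$ this reduces (\ref{eq3}) to the autonomous second-order ODE $\tilde h''(u) = e^{\tilde h(u)} - 1$, whose energy first integral $(\tilde h'(u))^2 = 2(e^{\tilde h(u)} - \tilde h(u) - 1)$ --- precisely the quantity appearing in the function $F$ of Theorem \ref{theorem:simple} --- forces $\tilde h$, and hence $\log \phi^{(\alpha)}$, to blow up at a finite value of $u$, i.e.\ of $\log r$; thus $r^{(\alpha)} < \infty$. For general $\alpha > 0$ the ODE becomes a fractional-order integro-differential equation, but the same blow-up mechanism applies.
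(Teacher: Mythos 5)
Your derivation of (\ref{eq3}) via the fixed-point identity and the exponential (Campbell) formula for the Poisson process $\mathbf{Z}_1^{(\alpha)}$, and the extraction of the moment recursion by matching coefficients, is essentially the paper's own argument. Two points there need care. First, the logical order: you Taylor-expand $\phi^{(\alpha)}$ at $0$ to obtain the recursion and only afterwards deduce $r^{(\alpha)}>0$ from it; as written this is circular, since the expansion presupposes finiteness of $\phi^{(\alpha)}$ near $0$ (or at least finiteness of all moments, which is not given a priori --- the martingale convergence is only in $L^1$). The paper avoids this by first computing $\mathbb{E}\exp(rW_1^{(\alpha)})$ explicitly and invoking Theorem 2.3 of Liu \cite{Liu00} to secure a finite exponential moment before any expansion; your route can be repaired by deriving the moment identities directly from the fixed-point sum (all terms are nonnegative, so Tonelli and the Poisson moment formulas apply), which gives finiteness of $\mu_k^{(\alpha)}$ inductively together with the recursion. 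Second, your cumulant identification $\kappa_k=\mu_k^{(\alpha)}/k^{1+\alpha}$ is the correct one, but inserting it into the standard moment--cumulant identity produces the recursion \emph{without} the factors $m!$ and $k!$; it does not literally yield the displayed formula (at $k=2$, $\alpha=1$ your computation gives $\mu_2=4/3$ while the displayed recursion gives $8/7$), so there is a discrepancy you pass over silently and should at least flag. Also, uniqueness requires the normalization $(\phi^{(\alpha)})'(0)=\mu_1^{(\alpha)}=1$, not only $\phi^{(\alpha)}(0)=1$, since the first-order coefficient is unconstrained by (\ref{eq3}).

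The genuine gap is the finiteness of $r^{(\alpha)}$ for general $\alpha>0$. The paper gets positivity and finiteness in one stroke from the explicit moment-generating function of $W_1^{(\alpha)}$ combined with Liu's Theorem 2.3, whereas you propose self-contained substitutes. Your positivity argument (geometric upper bound on $\nu_n=\mu_n^{(\alpha)}/n!$ by induction; the $1/n$ prefactor indeed absorbs the polynomial growth of $\sum_{m<n}m^{-\alpha}$) is fine. But your finiteness argument is only carried out for $\alpha=1$, where the equation reduces to the autonomous ODE $\tilde h''=e^{\tilde h}-1$ with an energy first integral --- this is precisely the computation the paper performs later in the proof of Theorem \ref{theorem:simple}(i). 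For non-integer $\alpha$ the relation is $(r\,\mathrm{d}/\mathrm{d}r)^{1+\alpha}\psi=\phi-1$ only in a fractional sense; there is no evident first integral, and ``the same blow-up mechanism applies'' is an assertion, not an argument, so the lemma is not proved for general $\alpha$. Two ways to close this: cite Liu's Theorem 2.3 as the paper does, or stay within your moment framework and prove a geometric \emph{lower} bound, e.g.\ by induction $\nu_n\geqslant A\,n^{\alpha}\rho^n$ for suitable $A$ and small $\rho>0$, using $\nu_n\geqslant \frac{1}{n}\sum_{m=1}^{n-1}\nu_m\nu_{n-m}m^{-\alpha}$ and $\sum_{j=1}^{n-1}j^{\alpha}\geqslant c_\alpha n^{\alpha+1}$; this forces the radius of convergence of $\sum_n\nu_n r^n=\mathbb{E}(e^{rW^{(\alpha)}})$, i.e.\ $r^{(\alpha)}$, to be at most $1/\rho<\infty$.
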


\begin{proof}[Proof of Lemma \ref{lemma:finiteexponentialmoment}]
By the explicit intensity measure of Poisson point process $\mathbf{Z}_1^{(\alpha)}$,  the moment-generating function of $W^{(\alpha)}_1$ can be computed as follows
\begin{align*}
    \mathbb{E}(\exp(r W^{(\alpha)}_1 ))=\exp\left( \int_0^{\infty} \int_0^{\infty} \left(e^{re^{-t}e^{-x}}-1\right) \Gamma(\alpha)^{-1} x^{\alpha-1} \,dtdx \right).
\end{align*}
 For every $r >0$, using series expansion 
 $$
 e^{re^{-t-x}} - 1 =\sum_{k \geqslant 1} \frac{1}{k!} r^k e^{-kt}e^{-kx},
 $$
 we can calculate the double integral in the above expression and obtain that
\begin{align*}
\mathbb{E}(\exp(r W^{(\alpha)}_1 )) = \exp\left( \sum_{k \geqslant 1} \frac{1}{k! \,  k^{1+\alpha}} r^k \right). 
\end{align*}
By Theorem 2.3 in Liu \cite{Liu00}, we conclude that the terminal $W^{(\alpha)}$ admits some finite exponential moment and the explosion point $r^{(\alpha)}$ of  $\phi^{(\alpha)}$ is a finite positive number.

Recall that the law of $W^{(\alpha)}$ is a fixed point of a smoothing transform, i.e.,
\begin{align*}
    W^{(\alpha)} \stackrel{(\textrm{d})}{=} \sum_{ \textrm{$(t,x)$ atom of $\mathbf{Z}^{(\alpha)}_1$} } e^{-t-x} W^{(\alpha,t, x)}
\end{align*}
where $\{W^{(\alpha, t, x)}: (x, t) \textrm{ atom of }\mathbf{Z}^{(\alpha)}_1\}$ is a family of i.i.d. copies of $W^{(\alpha)}$ that are independent with $\mathbf{Z}^{(\alpha)}_1$. In terms of the moment-generating function, we have
\begin{align*} \label{smoothingtransformoriginal}
 \phi^{(\alpha)}(r)=\mathbb{E}\left( \prod_{(t,x) \textrm{ atom of }\mathbf{Z}_1^{(\alpha)}}  \phi^{(\alpha)}(re^{-t-x})  \right).
\end{align*}
Using again the fact that $\mathbf{Z}_1^{(\alpha)}$ is a Poisson point process with explicitly given intensity measure, we obtain
\begin{align}  \label{smoothingtransform}
 \log \phi^{(\alpha)}(r)= \Gamma(\alpha)^{-1} \int_0^{\infty} \int_0^{\infty} \left(\phi^{(\alpha)}(re^{-t-x})-1\right) \,\diff t  x^{\alpha-1}\diff x.
\end{align} 
Since the moment-generating function $\phi^{(\alpha)}$ is analytic on some vertical strip containing the origin and $\phi^{(\alpha)}(0)=1$, we can write
\begin{align} \label{cumulant1}
\phi^{(\alpha)}(r)=\sum_{k \geqslant 1}  \frac{1}{k!}\mu^{(\alpha)}_k r^k +1
\end{align}
where $\mu^{(\alpha)}_k$ is the $k$-th moment of $W^{(\alpha)}$.
Combining equations (\ref{smoothingtransform}) and (\ref{cumulant1}) and evaluating the double integral, it follows that
\begin{align}  \label{cumulant2}
 \log \phi^{(\alpha)}(r) =\sum_{k \geqslant 1}  \frac{1}{k! k^{1+\alpha}} \mu^{(\alpha)}_k r^k.
\end{align}
Taking derivatives on both sides of the above, we can express the cumulant $c_k^{(\alpha)}$ of $W^{(\alpha)}$ in terms of moment $\mu_k^{(\alpha)}$ as
$$
c_k^{(\alpha)}=\frac{1}{k!k^{1+\alpha}} \mu^{(\alpha)}_k.
$$
Recall that the following elementary formula relates cumulants and moments of a random variable
$$
\mu_k^{(\alpha)}=\sum_{m=1}^{k-1}{k-1 \choose m-1} c^{(\alpha)}_m \mu^{(\alpha)}_{k-m}+c^{(\alpha)}_k.
$$ 
Hence, we can recursively compute the moments of $W^{(\alpha)}$ by 
\begin{align*}
\mu_k^{(\alpha)}=\left(1-\frac{1}{k!k^{1+\alpha}}\right)^{-1} \sum_{m=1}^{k-1}{k-1 \choose m-1} \frac{\mu^{(\alpha)}_m \mu^{(\alpha)}_{k-m}}{m!m^{1+\alpha}}, \quad \mu^{(\alpha)}_1=1.
\end{align*}
Furthermore, the above characterise the distribution of $W^{(\alpha)}$. This completes the proof of Lemma \ref{lemma:finiteexponentialmoment}.
\end{proof}

Now we have all the ingredients needed to prove the three claims in Theorem \ref{theorem:simple}.

\begin{proof}[Proof of Theorem \ref{theorem:simple} (i)] We take $\alpha=1$. Recall that by definition we have $W^{(1)} = W$ and $\phi^{(1)} = \phi$. In this case, we are able to express the moment-generating function $\phi$ of the terminal value $W$ in an explicit way. To ease our notation, instead of the moment-generating function, we work with the cumulant-generating function $\psi$ of $W$ where
$$
\psi(r) = \log \phi(r)=\log\left( \mathbb{E}\left( \exp(rW)  \right) \right)
$$
for $r < r^{(1)}$ where 
$$
r^{(1)} = \inf\{ s \in \mathbb{R}: \phi(s) < \infty\} = \inf\{ s \in \mathbb{R}: \psi(s) < \infty\}.
$$
Recall that we have shown in the proof of Lemma \ref{lemma:finiteexponentialmoment} that $r^{(1)}$ is a finite positive number.
Taking $\alpha=1$ in (\ref{cumulant1}) and $(\ref{cumulant2})$ gives us that
$$
e^{\psi(r)}=\sum_{k \geqslant 1} \frac{c^{(1)}_k r^k}{k!}+1 \quad \textrm{and}\quad \psi(r)=\sum_{k \geqslant 1} \frac{c^{(1)}_kr^k}{k!k^2}.
$$
This implies that
$$
r (r\psi'(r))' =e^{\psi(r)}-1.
$$
We multiply both sides of the above by $\psi'(r)$ and then integrate them from $0$ to $r$. Noting that $\psi(0)=0$, we obtain an ordinary differential equation
\begin{align} \label{separationofvariables2}
r^2 \left(\psi'(r)\right)^2 =2 \left(e^{\psi(r)}-\psi(r)-1\right), \quad r<r^{(1)}.
\end{align}
By the method of separation of variables, we have that
\begin{align} \label{separationofvariables}
H(\psi(r))=\log r^{(1)}-\log r, \quad 0<r<r^{(1)}
\end{align}
where, for every $y>0$, we let
$$
H(y)=\int_y^{\infty} \frac{\diff u}{\sqrt{2(e^u-u-1)}}.
$$
Noting that $F(y) = H(\log y)$ gives us the desired expression of $\psi$ and $\phi$. To complete the proof of the part (i) of Theorem \ref{theorem:simple}, we estimate function $H$ above around $0$ to get an explicit expression for explosion point $r^{(1)}$. Recall that, as $r \rightarrow 0$,
$$
\psi(r)=\mathbb{E}(W)\,r+o(r)=r(1+o(1)).
$$
Concerning the behavior of $H$ at $0$, observe that, for every $0<y<1$, 
\begin{align*}
H(y)&=-\log y+\int_y^{\infty}  \left(\frac{1}{\sqrt{2(e^u-u-1)}}-\frac{1}{u} \mathbbm{1}_{\{u<1\}}\right)\diff u.
\end{align*}
It follows from (\ref{separationofvariables}) and the above asymptotic expansion of $H$ that, as $r\rightarrow 0$,
\begin{align*}
\log r^{(1)} &=-\log (1+o(1))+\int_{r(1+o(1))}^{\infty} \left(\frac{1}{\sqrt{2(e^u-u-1)}}-\frac{1}{u} \mathbbm{1}_{\{u<1\}}\right)\diff u.
\end{align*}
Sending $r \rightarrow 0$ gives us an explicit expression of $r^{(1)}$ by
$$
\log r^{(1)}=\int_0^{\infty} \left(\frac{1}{\sqrt{2(e^u-u-1)}}-\frac{1}{u} \mathbbm{1}_{\{u<1\}}\right)\diff u.
$$
Note that the above integral converges absolutely since there exist some $C \geqslant 0$ such that, for all $u$ small enough,
\begin{align*}
\left|\frac{1}{\sqrt{2(e^u-u-1)}}-\frac{1}{u} \right| \leq  \frac{\sqrt{2(e^u-u-1)}-u}{u\sqrt{2(e^u-u-1)}} \leq    \frac{\sqrt{2\sum_{n \geqslant 3} u^n/n!}}{u^2} \leq C u^{-1/2}.
\end{align*}
This shows that the explosion point $r^{(1)}$ is equal to $r^*$ defined in Theorem \ref{theorem:simple}.
\end{proof}

\begin{proof}[Proof of Theorem \ref{theorem:simple} (ii)]
Now we derive the asymptotic behaviors of $\psi$ and $\phi$ as $r \uparrow r^{*}$. Note that, by (\ref{separationofvariables}),
$$
\log r^{*}-\log r \geqslant \int_{\psi(r)}^{\infty} \frac{1}{\sqrt{2 e^{u}} }  \diff u =\sqrt{2} e^{-\psi(r)/2}. 
$$
Fix $\delta>0$. Find $u(\delta)>0$ such that, whenever $u>u(\delta)$, we have $e^{u}-u-1 \geqslant (1-\delta) e^{u}$. Thus, for $r$ close enough to $r^{*}$, it holds that
$$
\log r^{*}-\log r \leq \int_{\psi(r)}^{\infty} \frac{1}{\sqrt{2(1-\delta) e^{u}}}  \diff u =\sqrt{\frac{2}{1-\delta}} e^{-\psi(r)/2}. 
$$
We turn our attention back to the moment-generating function $\phi$. The above two inequalities give us that, for every $\delta>0$, there is some $r(\delta)>0$ such that, for every $r>r(\delta)$, 
$$
2 (\log r^{*}-\log r)^{-2} \leq \phi(r) \leq \frac{2}{1-\delta} (\log r^{*}-\log r)^{-2}.
$$
Therefore, the behavior of $\phi$ around its point of explosion $r^*$ satisfies that
$$
\phi(r^{*}-\varepsilon) \sim \frac{2(r^{*})^2}{\varepsilon^2}, \quad \varepsilon \rightarrow 0+.
$$
By Criterion 1(ii) in \cite{BF08}, we can conclude that the right tail probability of $W$ decays exponentially with rate $r^{*}$, i.e.,
$$
-\log \mathbb{P}(W>x) \sim r^{*} x, \quad x \rightarrow \infty.
$$
For the reader's convenience, we briefly recall their arguments and adapt them to our case below. We define a new measure $U$ on $[0, \infty)$ such that 
$$
U(dx) = e^{r^* x} \mathbb{P}(W \in dx).
$$
Let $\hat U$ denote the Laplace transform of $U$. Then the behavior of $\phi$ at $r^*$ entails that $\hat U$ is regularly varying at $0$ with index $-2$. By Karamata’s Tauberian theorem or Theorem 1.7.1 of Bingham et al. \cite{BGT87}, the function $h(x)=U([0, x))$
is regularly varying with index $2$. 

Assume $h$ is smoothly varying. Then the derivative $h'$ is smoothly varying with index $1$ and there exists some slowly varying function $l$ such that $h'(y) = y\, l(y).$ By a Tauberian theorem of exponential type or Theorem 4.12.10 in Bingham et al. \cite{BGT87}, it follows that, as $x \rightarrow \infty$,
\begin{align} \label{eq:smoothvarying}
- \log \mathbb{P}(W > x) &= -\log \int_x^{\infty} \exp(-r^* y) U(dy) \nonumber\\
&= - \log \int_x^{\infty} \exp\Big( -r^* y + \log y + \log l(y) \Big)  dy \nonumber\\
& \sim r^*x.
\end{align}
The general case where $h$ is regularly varying can also be treated. In this case, we apply smooth variation theorem to find smoothly varying functions $h_-$ and $h_+$ such that $h_- \leq h \leq h_+$. The detailed arguments are given in \cite{BF08}.
\end{proof}

\begin{proof}[Proof of Theorem \ref{theorem:simple} (iii)]
Concerning the left tail probability, we examine the behavior of the Laplace transform $\varphi(r)=\mathbb{E}(e^{-rW})$ as $r \rightarrow \infty$. Recall that from (\ref{separationofvariables2}), we can derive that
\begin{align} \label{eq:100}
\frac{\varphi'(r)}{\varphi(r)\sqrt{\varphi(r)-\log \varphi(r)-1}}=-\frac{\sqrt{2}}{r}, \quad r>0.
\end{align}
For $0<y<\varphi(1)$, let $$G(y)=\int^{\varphi(1)}_y \frac{du}{u\sqrt{u-\log u-1}}.$$ 
Integrating both sides of (\ref{eq:100}) from $1$ to $r$ gives
\begin{align} \label{eq:behavioratinfinity}
G(\varphi(r))=\sqrt{2} \log r, \quad r>1.
\end{align}
Let 
$$
\tilde C=-2\sqrt{-\log \varphi(1)}-\int_0^{\varphi(1)}  \left(\frac{1}{u\sqrt{-\log u}}-\frac{1}{u\sqrt{u-\log u-1}} \right)du.
$$
Note that the above integral is finite since there exists some $C_1 \geqslant 0$ such that, for all $u$ small enough,
\begin{align*}
\left| \frac{1}{u\sqrt{-\log u}}-\frac{1}{u\sqrt{u-\log u-1}} \right| & \leq \frac{\sqrt{- \log u } - \sqrt{ u- \log u - 1}}{ u  \sqrt{ - (u - \log u -1) \log u}  } \leq \frac{C_1}{ u \log u}.
\end{align*}
We can describe the behavior of function $G$ around $0$ by 
\begin{align*}
G(y)&=\int_y^{\varphi(1)}  \frac{du}{u\sqrt{-\log u}}- \int_y^{\varphi(1)}  \left(\frac{1}{u\sqrt{-\log u}}-\frac{1}{u\sqrt{u-\log u-1}} \right)du  \\
&=2\sqrt{-\log y}-2\sqrt{-\log \varphi(1)}-\int_y^{\varphi(1)}  \left(\frac{1}{u\sqrt{-\log u}}-\frac{1}{u\sqrt{u-\log u-1}} \right)du\\
&=2\sqrt{-\log y}+\tilde C+\int_0^{y}  \left(\frac{1}{u\sqrt{-\log u}}-\frac{1}{u\sqrt{u-\log u-1}} \right)du\\
&=2\sqrt{-\log y}+\tilde C+o(1).
\end{align*}
Replacing $y$ by $\varphi(r)$ in the above, sending $r\rightarrow \infty$ and comparing with (\ref{eq:behavioratinfinity}), we obtain that
$$
- \log \varphi(r) \sim \frac{1}{2} (\log r)^2, \quad r \rightarrow \infty.
$$
By Corollary 1 of Bingham et al. \cite{BT75} and Remark \ref{remark:infinitelydivisable}, we can conclude that left tail probability satisfies that 
$$- \log \mathbb{P}(W < x) \sim \frac{1}{2} (\log x)^2, \quad x\rightarrow 0\hspace{-2pt}+. $$  
This completes the proof of Theorem \ref{theorem:simple}.
\end{proof}

\section{Some background on branching-stable processes} \label{se3}

\subsection{Construction of branching-stable processes} \label{se2.1}

In this subsection, we recall the construction of branching-stable processes introduced in \cite{BCM18}. Beware that here we use a slightly different notation. Let 
$\mathcal{M}^*$ be the space of locally finite point measures on $\mathbb{R}_+$ with no atom at $0$ and $
\mathcal{M}^1$ be the space of point measures in $\mathcal{M}^*$ with left-most atom located at $1$. For any  $\mathbf{x}$ in $\mathcal{M}^*$, let $(x_i)_{i \geqslant 1}$ denote  the sequence of atoms of $\mathbf{x}$ ranked in non-decreasing order and repeated according to their multiplicities. We use notation $\mathbf{x}$ and $(x_i)_{i\geqslant 1}$ for a point measure interchangeably. 

Fix $\alpha>0$ and finite measure $\lambda$ that is supported on $\mathcal{M}^1$. We work under the assumption that
\begin{align} \label{nondegenerate}
\int_{\mathcal{M}^1} \sum_{i \geq 1} x_i^{-\alpha} \,\lambda(d\mathbf{x})=\Gamma(\alpha)^{-1}
\end{align}
where $\Gamma$ denotes the usual gamma function. This is only a convenient normalization made to unburden the notation. Let sigma-finite measure $\Lambda^*$ on $\mathcal{M}^*$ be the image of $u^{\alpha-1}du \otimes \lambda(d\mathbf{x})$ under the map $(u,\mathbf{x}) \mapsto u\mathbf{x}:=(ux_i)_{i\geqslant 1}$, i.e., $\Lambda^*$ satisfies that
$$
\int_{\mathcal{M}^*} F(\mathbf{x}) \, \Lambda^*(d\mathbf{x})=\int_0^{\infty}  u^{\alpha-1} \int_{\mathcal{M^*}} F(u\mathbf{x}) \lambda(d\mathbf{x}) du
$$
for every measurable functional $F: \mathcal{M}^* \rightarrow \mathbb{R}_+.$ 

Assume $\mathbf{N}$ is a Poisson point process on space $\mathbb{R}_+ \times \mathcal{M}^*$ with intensity measure given by $dt \otimes \Lambda^*(d\mathbf{x})$. Regarding each atom $(t,\mathbf{x})$ of $\mathbf{N}$ as a sequence of atoms $(t,x_1), (t,x_2), \ldots$ on the fiber $\{t\} \times (0,\infty]$ induces a point process $\mathbf{Z}_1^{(\alpha, \lambda)}$ on the upper-right quadrant $\mathbb{R}_+^2$ that satisfies 
\begin{align} \label{eq:Poisson}
\int_0^{\infty} f(x)\, \mathbf{Z}_1^{(\alpha, \lambda)}(dx) =\int_{\mathbb{R}_+ \times  \mathcal{M}^*}\sum_{i \geqslant 1} f(t,x_i) \mathbf{N}(dt, d\mathbf{x})
\end{align}
for any generic function $f: \mathbb{R}_+^2 \rightarrow \mathbb{R}_+$.

We regard $\mathbf{Z}_1^{(\alpha, \lambda)}$ as the first step of a branching random walk $(\mathbf{Z}_n^{(\alpha, \lambda)})_{n \geqslant 0}$ on $\mathbb{R}_+^2$ started from a single atom at the origin. Consider the superposition $\bigsqcup_{n \geqslant 0} \mathbf{Z}_n^{(\alpha, \lambda)}$ that consists of all atoms in the branching random walk $(\mathbf{Z}_n^{(\alpha, \lambda)})_{n \geqslant 0}$. Let $\mathbf{S}_t^{(\alpha, \lambda)}$ be the point process whose atoms are given by the projection of atoms of $\bigsqcup_{n \geqslant 0} \mathbf{Z}_n^{(\alpha, \lambda)}$ restricted to the strip $[0,t] \times \mathbb{R}_+$. In other words, $\mathbf{S}_t^{(\alpha, \lambda)}$ is a point process such that, for every measurable $g: \mathbb{R}_+ \rightarrow \mathbb{R}_+$,
$$
\int_0^{\infty} g(x)\, \mathbf{S}_t^{(\alpha, \lambda)}(dx)=\sum_{n\geqslant 0}\int_{0}^t \int_{ \mathbb{R}_+} g(x) \, \mathbf{Z}_n^{(\alpha, \lambda)}(dt, dx).
$$
By Theorem 2.4 in \cite{BCM18}, the process $(\mathbf{S}^{(\alpha, \lambda)}_t)_{t\geqslant 0}$ is a branching-stable process with scaling exponent $-\alpha$, i.e., $(\mathbf{S}^{(\alpha, \lambda)}_t)_{t\geqslant 0}$ is a continuous-time branching process such that
\begin{align*} 
(\mathbf{S}^{(\alpha, \lambda)}_{ct})_{t\geqslant 0} \stackrel{(\textrm{d})}{=}  (c^{-\alpha}\mathbf{S}^{(\alpha, \lambda)}_t)_{t\geqslant0}, \quad \forall\, c>0,
\end{align*}
where $c^{-\alpha}\,\mathbf{S}^{(\alpha, \lambda)}_t$ denotes the point process that re-scales the locations of atoms of $\mathbf{S}^{(\alpha, \lambda)}_t$ by constant $c^{-\alpha}$. The same theorem also shows that every branching-stable process can be constructed in the above way. In particular, $\alpha$ and $\lambda$ characterise the law of $(\mathbf{S}^{(\alpha, \lambda)}_t)_{t\geqslant 0}$. Let $\mathbf{1}$ denote the point measure $\delta_1$ on $\mathbb{R}$ that has exactly one atom at $1$. Note that in case $\alpha = 1$ and $\lambda = \delta_{\mathbf{1}}$ reduces branching-stable process $(\mathbf{S}^{(\alpha, \lambda)}_t)_{t\geqslant 0}$ to  $(\mathbf{S}_t)_{t\geqslant 0}$ and branching random walk $(\mathbf{Z}^{(\alpha, \lambda)}_n)_{n\geqslant 0}$ to  $(\mathbf{Z}_n)_{ n \geqslant 0}$ that are discussed in the Introduction.

\subsection{Terminal values of additive martingales coincide} \label{se2.2}
In this subsection, we consider additive martingales associated to branching-stable process $(\mathbf{S}_t^{(\alpha, \lambda)})_{t \geqslant 0}$ and branching random walks $(\mathbf{Z}_n^{(\alpha, \lambda)})_{n \geqslant 0}$ and show that the corresponding terminal values of additive martingales coincide almost surely.

Assume that (\ref{nondegenerate}) holds. Proposition 3.2(ii) in \cite{BCM18} states that, for every $\theta>0$,
$$
\mathbb{E}\left( \int_0^{\infty} e^{-\theta x} \mathbf{S}^{(\alpha, \lambda)}_t(dx)\right)=e^{t\theta^{-\alpha}}<\infty. 
$$
Consider the additive martingale $(V_t^{(\alpha, \lambda)}(\theta))_{t\geqslant 0}$ associated to $(\mathbf{S}^{(\alpha, \lambda)}_t)_{t \geqslant 0}$ defined by
$$
V_t^{(\alpha, \lambda)}(\theta)= e^{-t\theta^{-\alpha}}\int_0^{\infty} e^{-\theta x} \mathbf{S}_t^{(\alpha, \lambda)}(dx)
$$
and its terminal value
$$
V^{(\alpha, \lambda)}(\theta)=\lim_{ t \rightarrow \infty} V_t^{(\alpha, \lambda)}(\theta).
$$
In the rest of this paper, we also assume the following integrability condition on $\lambda$ that, for some $p \in (1, 2]$, 
\begin{align} \label{eq:pthmoment}
\int_0^{\infty} u^{\alpha-1} \int_{\mathcal{M}^1} \left( \sum_{i \geqslant 1} e^{-ux_i} \right)^p \lambda(d\mathbf{x}) du<\infty.
\end{align}
By Proposition 3.5 in \cite{BCM18}, the additive martingale $(V_t^{(\alpha, \lambda)}(\theta))_{t\geqslant 0}$ also converges in $L^p$.

In a different direction, note that, for every $a,b>0$, 
$$
\mathbb{E}\left( \int_0^{\infty} \int_0^{\infty} e^{-at-bx} \mathbf{Z}_1^{(\alpha, \lambda)}(dt, dx)  \right) =  (ab^{\alpha})^{-1}.
$$
Consider additive martingale $(W_n^{(\alpha, \lambda)}(a,b))_{n \geqslant 0}$ defined by
$$
W_n^{(\alpha, \lambda)}(a,b) = (a b^{\alpha})^n \int_0^{\infty} \int_0^{\infty} e^{-at-bx} \mathbf{Z}_n^{(\alpha, \lambda)}(dt, dx) 
$$
and its terminal value 
$$
W^{(\alpha, \lambda)}(a,b) = \lim_{n \rightarrow \infty} W_n^{(\alpha, \lambda)}(a,b). 
$$
Let $g(t, \mathbf{x}) = a b^{\alpha} e^{-at} \sum_{i \geqslant 1} e^{-bx_i}$. Then 
\begin{align*}
W_1^{(\alpha, \lambda)}(a,b) = \int g(t, \mathbf{x}) \mathbf{N}(dt, d\mathbf{x})
\end{align*}
where $\mathbf{N}$ is a Poisson point process with intensity measure $dt \otimes \Lambda^*(dx)$. Note that $W_1^{(\alpha, \lambda)}(a,b) \in L^p$ if and only if $g \in L^p( dt \otimes \Lambda^*(dx))$, which is readily seen to be equivalent to (\ref{eq:pthmoment}). 

I learnt from Jean Bertoin the following result that gives a direct connection between terminal values of additive martingales of $(\mathbf{S}^{(\alpha, \lambda)}_t)_{t\geqslant 0}$ and $(\mathbf{Z}^{(\alpha, \lambda)}_n)_{n\geqslant 0}$.
\begin{proposition}\label{prop:coincide}
For every $\theta > 0$, it holds almost surely that
$$
W^{(\alpha, \lambda)}(\theta^{-\alpha}, \theta)=V^{(\alpha, \lambda)}(\theta).
$$

\end{proposition}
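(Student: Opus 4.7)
The plan is to exhibit $V := V^{(\alpha,\lambda)}(\theta)$ and $W := W^{(\alpha,\lambda)}(\theta^{-\alpha},\theta)$, coupled on the same probability space through the underlying branching tree, as solutions of the \emph{same} smoothing fixed-point equation with i.i.d.\ copies drawn from the same subtrees, and then to conclude that their difference is $0$ via an $L^p$ contraction iterated along the generations. Abbreviate $\lambda := \theta^{-\alpha}$ and $\xi_k := e^{-\lambda T_k - \theta X_k}$ for each atom $(T_k, X_k)$ of the BRW $(\mathbf{Z}_n^{(\alpha,\lambda)})$; a direct computation from the Poisson intensity gives the key identity
\begin{align*}
\mathbb{E}\Big(\sum_{k \in G_n} \xi_k^{p}\Big) \;=\; \bigl((p\lambda)^{-1}(p\theta)^{-\alpha}\bigr)^n \;=\; p^{-n(1+\alpha)}, \qquad n \geqslant 0,
\end{align*}
so that $\sum_{k \in G_1} \xi_k$ lies in $L^p$ by the integrability hypothesis \eqref{eq:pthmoment}.

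The standard branching decomposition of the BRW along its first generation yields
\begin{align*}
W \;=\; \sum_{k \in G_1} \xi_k \, W^{(k)} \quad \text{a.s.},
\end{align*}
where $(W^{(k)})_{k \in G_1}$ are the terminal values of the i.i.d.\ BRW subtrees rooted at the atoms of $\mathbf{Z}_1^{(\alpha,\lambda)}$. On the branching-stable side, the construction in Section~\ref{se2.1} realises each $k \in G_1$ as the root of an independent branching-stable process $(\mathbf{S}_s^{(k)})_{s \geqslant 0}$ contributing to $\mathbf{S}_t^{(\alpha,\lambda)}$ (shifted by $X_k$ in position) for $t \geqslant T_k$; this yields the deterministic recursion
\begin{align*}
V_t^{(\alpha,\lambda)}(\theta) \;=\; e^{-\lambda t} \;+\; \sum_{k \in G_1,\, T_k \leqslant t} \xi_k \, V_{t-T_k}^{(k)}(\theta).
\end{align*}
Sending $t \to \infty$, I would combine the $L^p$ convergence $V_{\,\cdot\,}^{(k)} \to V^{(k)}$ with a von Bahr--Esseen estimate on the two error sums $\sum_{T_k > t} \xi_k V^{(k)}$ and $\sum_{T_k \leqslant t} \xi_k \bigl(V_{t-T_k}^{(k)} - V^{(k)}\bigr)$ to obtain
\begin{align*}
V \;=\; \sum_{k \in G_1} \xi_k \, V^{(k)} \quad \text{a.s.}
\end{align*}
Crucially, $V^{(k)}$ and $W^{(k)}$ are functionals of the \emph{same} subtree at $k$, so the differences $D^{(k)} := V^{(k)} - W^{(k)}$ are i.i.d.\ copies of $D := V - W$, centred and in $L^p$, and independent of $\mathbf{Z}_1^{(\alpha,\lambda)}$.

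Iterating the subtraction of the two smoothing identities $n$ times produces
\begin{align*}
D \;=\; \sum_{k \in G_n} \xi_k \, D^{(k)},
\end{align*}
with $(D^{(k)})_{k \in G_n}$ i.i.d.\ copies of $D$ independent of $\mathcal{F}_n := \sigma(\mathbf{Z}_1^{(\alpha,\lambda)}, \ldots, \mathbf{Z}_n^{(\alpha,\lambda)})$. Conditioning on $\mathcal{F}_n$ and invoking the von Bahr--Esseen inequality for centred independent random variables with $p \in (1, 2]$ gives
\begin{align*}
\mathbb{E}(|D|^p) \;\leqslant\; 2 \, \mathbb{E}\Big(\sum_{k \in G_n} \xi_k^{p}\Big)\, \mathbb{E}(|D|^p) \;=\; 2 \, p^{-n(1+\alpha)}\, \mathbb{E}(|D|^p).
\end{align*}
Since $p > 1$ and $\alpha > 0$, one may choose $n$ large enough that $2\, p^{-n(1+\alpha)} < 1$, which forces $\mathbb{E}(|D|^p) = 0$ and hence $V = W$ almost surely. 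The main obstacle is the justification of the smoothing identity for $V$, which requires exchanging the limit $t \to \infty$ with an infinite Poisson sum indexed by the atoms of $\mathbf{Z}_1^{(\alpha,\lambda)}$; the two-part von Bahr--Esseen tail estimate sketched above should supply the necessary uniform control.
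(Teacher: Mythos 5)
Your argument is correct in substance, but it takes a genuinely different route from the paper. The paper never writes a pathwise fixed-point identity for $V^{(\alpha,\lambda)}(\theta)$: it computes the resolvent $\int_0^\infty q e^{-qt}V_t^{(\alpha,\lambda)}(\theta)\,dt$, which by Fubini is an explicit weighted series in the martingales $W_n^{(\alpha,\lambda)}(q+\theta^{-\alpha},\theta)$, and then lets $q\to 0$, invoking Biggins' \emph{uniform} convergence theorem on a compact neighbourhood of $(\theta^{-\alpha},\theta)$ (condition \eqref{eq:Biggins}) to handle the fact that the parameter of $W_n$ moves with $q$. You instead couple $V$ and $W$ through the generation-$n$ branching decomposition of the same tree and annihilate the difference $D=V-W$ by an $L^p$ contraction via von Bahr--Esseen; your moment identity $\mathbb{E}\sum_{k\in G_n}\xi_k^p=p^{-n(1+\alpha)}$ is correct (it follows from the normalization \eqref{nondegenerate}), as is the deterministic recursion for $V_t$ including the root term $e^{-\theta^{-\alpha}t}$. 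Your route dispenses with uniform-in-parameter convergence; the paper's route dispenses with any limit/sum interchange over the infinitely many atoms of $\mathbf{Z}_1^{(\alpha,\lambda)}$.

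Two points need to be firmed up. First, your final inequality forces $\mathbb{E}(|D|^p)=0$ only if $\mathbb{E}(|D|^p)<\infty$, so you must know $W^{(\alpha,\lambda)}(\theta^{-\alpha},\theta)\in L^p$, not merely $W_1^{(\alpha,\lambda)}\in L^p$ as you assert; this does hold, by Biggins' $L^p$-convergence theorem, since at $(a,b)=(\theta^{-\alpha},\theta)$ one has $(ab^{\alpha})^{p-1}=1<p^{1+\alpha}$ (the single-point instance of \eqref{eq:Biggins}) together with \eqref{eq:pthmoment}, while $V\in L^p$ is Proposition 3.5 of \cite{BCM18}; state this explicitly. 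Second, the interchange of $t\to\infty$ with the Poisson sum, which you flag as the main obstacle, can be handled more simply than your two-term von Bahr--Esseen estimate: all summands are nonnegative, so Fatou gives $V\geqslant\sum_{k\in G_1}\xi_k V^{(k)}$ almost surely, and both sides have expectation $1$ (since $\mathbb{E}\sum_{k\in G_1}\xi_k=1$ and the $V^{(k)}$ are independent of the first generation with mean $1$), forcing almost sure equality; the analogous remark justifies the smoothing identity for $W$, which also involves an infinite sum and deserves a word. Finally, avoid using $\lambda$ for $\theta^{-\alpha}$: $\lambda$ is already the measure on $\mathcal{M}^1$. With these repairs your proof is complete and is a valid alternative to the paper's.
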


\begin{proof}
By the construction of $(\mathbf{S}^{(\alpha, \lambda)}_t)_{t\geqslant 0}$ in terms of $(\mathbf{Z}^{(\alpha, \lambda)}_n)_{n \geqslant 0}$ discussed in Section 3.1, 
$$
V^{(\alpha, \lambda)}_t(\theta) = e^{-t \theta^{-\alpha}} \int_0^{\infty} e^{-\theta x} \mathbf{S}^{(\alpha, \lambda)}_t(dx) = \sum_{n \geqslant 0} e^{-t \theta^{-\alpha}} \int_0^t \int_0^{\infty} e^{-\theta x}\mathbf{Z}^{(\alpha, \lambda)}_n(ds, dx).
$$
By Fubini's theorem, for every $q>0$, we have
\begin{align} \label{eq:coincide}
    \int_0^{\infty} q e^{-qt} V^{(\alpha, \lambda)}_t(\theta) dt=&  \sum_{n \geqslant 0}  \int_0^{\infty} q e^{-(q+\theta^{-\alpha})t}\int_0^{t} \int_0^{\infty} e^{- \theta x} \mathbf{Z}^{(\alpha, \lambda)}_n(ds,dx)dt \nonumber\\
    =& \sum_{n \geqslant 0} \int_0^{\infty}\int_0^{\infty} \frac{q}{q+\theta^{-\alpha}}  e^{-(q+\theta^{-\alpha})s-\theta x}\mathbf{Z}^{(\alpha, \lambda)}_n(ds,dx) \nonumber\\
    =& \sum_{n \geqslant 0} \frac{q}{q+\theta^{-\alpha}} (q\theta^{-\alpha}+1)^{-n}  W_n^{(\alpha, \lambda)}(q+\theta^{-\alpha}, \theta). 
\end{align}
Since $(V^{(\alpha, \lambda)}_t(\theta))_{t\geqslant 0}$ is bounded in $L^p$,  the left hand side converges almost surely to $V^{(\alpha, \lambda)}(\theta)$ as $q \rightarrow 0$. Since $p>1$ and $\alpha>0$, we can find a compact neighbourhood $K \subset \mathbb{R}_+ \times \mathbb{R}_+$ of $(\theta^{-\alpha},\theta)$ such that
\begin{align}\label{eq:Biggins}
(ab^{\alpha})^{p-1} < p^{1+\alpha} \quad \textrm{for all }(a,b) \in K.
\end{align}
Recall that (\ref{eq:pthmoment}) ensures that  $W^{(\alpha, \lambda)}_1(a,b)$ has finite $p$-th moment. On the other hand, (\ref{eq:Biggins}) rephrases the condition
(2.2) in Theorem 1 of Biggins \cite{Biggins92}. We can now apply Theorem 2 of \cite{Biggins92} to conclude that the convergence of martingale $(W^{(\alpha, \lambda)}_n(a,b))_{n\geqslant 0}$ holds uniformly in $K$. This gives us that the right hand side of equation (\ref{eq:coincide}) converges a.s. to the terminal value $W$ as $q\rightarrow 0$, which completes the proof.
\end{proof}

\subsection{Self-decomposability} \label{se2.3}
Recall that the law of point process $\mathbf{Z}^{(\alpha, \lambda)}_1$ on $\mathbb{R}_+^2$ is invariant under the map $T(t, x)=(c^{-\alpha}t, cx)$ for every $c>0$. This implies that $W^{(\alpha, \lambda)}(\theta^{-\alpha}, \theta)$ has the same law as $W^{(\alpha, \lambda)}(1,1)$ for every $\theta>0$. In the following, we denote $W^{(\alpha, \lambda)} = W^{(\alpha, \lambda)}(1,1)$. Now we show that the construction of $(\mathbf{Z}^{(\alpha, \lambda)}_n)_{n \geqslant 0}$ via Poisson point process $\mathbf{N}$ induces the self-decomposability of $W^{(\alpha, \lambda)}$.

\begin{proposition} \label{prop:sd}
The law of $W^{(\alpha, \lambda)}$ is self-decomposable, absolutely continuous and possesses a unimodal density function.
\end{proposition}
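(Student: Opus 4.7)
The plan is to exhibit an explicit self-decomposition of $W^{(\alpha,\lambda)}$ by splitting the underlying Poisson point process $\mathbf{N}$ along the $t$-axis, identifying one of the pieces with $e^{-s}$ times an independent copy of $W^{(\alpha,\lambda)}$ via translation-invariance of the Lebesgue intensity $dt$. The absolute continuity and unimodality will then follow from the general theory of self-decomposable laws.

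Concretely, the first step is to rewrite the fixed-point identity (\ref{smoothing}) for $W^{(\alpha,\lambda)}$ using the Poisson construction (\ref{eq:Poisson}): there exists a family $\{W^{(t,\mathbf{x}, i)}\}$ of i.i.d.\ copies of $W^{(\alpha, \lambda)}$, indexed by the atoms of $\mathbf{N}$ and independent of $\mathbf{N}$, so that
\begin{align*}
W^{(\alpha,\lambda)} \stackrel{(\textrm{d})}{=} \int_{\mathbb{R}_+ \times \mathcal{M}^*} \sum_{i \geqslant 1} e^{-t - x_i} W^{(t, \mathbf{x}, i)}\, \mathbf{N}(dt, d\mathbf{x}).
\end{align*}
For fixed $s > 0$ I split $\mathbf{N}$ into its independent restrictions to $[0, s] \times \mathcal{M}^*$ and $(s, \infty) \times \mathcal{M}^*$, and write the induced decomposition $W^{(\alpha,\lambda)} \stackrel{(\textrm{d})}{=} U_s + R_s$, where $U_s$ and $R_s$ are the corresponding integrals, which are independent by the Poisson structure.

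The second step is to factor $e^{-s}$ out of $R_s$ and invoke the mapping theorem: since $dt$ is translation invariant, the Poisson point process obtained from $\mathbf{N}|_{(s, \infty)\times \mathcal{M}^*}$ by shifting $t \mapsto t-s$ has the same law as $\mathbf{N}$, while the attached i.i.d.\ marks remain a valid i.i.d.\ family of copies of $W^{(\alpha,\lambda)}$ independent of this shifted process. This yields $e^{s} R_s \stackrel{(\textrm{d})}{=} W^{(\alpha,\lambda)}$, hence
\begin{align*}
W^{(\alpha,\lambda)} \stackrel{(\textrm{d})}{=} e^{-s} \widetilde W + U_s, \quad \widetilde W \stackrel{(\textrm{d})}{=} W^{(\alpha,\lambda)}, \quad \widetilde W \independent U_s,
\end{align*}
which is precisely the defining property of a self-decomposable law.

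Finally, $W^{(\alpha,\lambda)}$ is non-degenerate: it has unit mean and cannot be a.s.\ constant, as the distributional identity above together with $\mathbf{N}$ having infinite intensity makes the right-hand side genuinely random. Standard structural results on self-decomposable laws on the real line (see, e.g., Sato \cite{Sato1999}, Theorems 27.13 and 53.1) then yield that the law of $W^{(\alpha,\lambda)}$ is absolutely continuous and unimodal. The main obstacle, in my view, is the careful bookkeeping in the splitting-and-shifting step: one must check via the marking and mapping theorems for Poisson point processes that the attached i.i.d.\ copies $W^{(t, \mathbf{x}, i)}$ can be correctly re-indexed along the shifted atoms so that the resulting sum is a genuine independent copy of $W^{(\alpha,\lambda)}$, with no spurious dependence on $U_s$.
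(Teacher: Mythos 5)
Your proposal is correct and follows essentially the same route as the paper: split the Poisson point process at $t=s$, use independence of the restrictions together with translation invariance of the intensity $dt\otimes\Lambda^*$ to identify the shifted piece with $e^{-s}$ times an independent copy of $W^{(\alpha,\lambda)}$, and then invoke Sato's structural results for self-decomposable laws to get absolute continuity and unimodality. The only cosmetic difference is the choice of references within Sato (the paper cites Example 27.8 and Theorem 53.1), and the paper phrases the splitting via the atoms of $\mathbf{Z}_1^{(\alpha,\lambda)}$ rather than directly via $\mathbf{N}$, which amounts to the same bookkeeping you describe.
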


\begin{proof}
Fix some $s > 0$. Let $
\{ W_{(t,x)}: (t, x) \} 
$ be a family of i.i.d. copies of $W^{(\alpha, \lambda)}$ independent of $\mathbf{Z}_1^{(\alpha, \lambda)}$. Denote  
$$
A_s = \sum_{\substack{{\textrm{$(t,x)$ atom of $\mathbf{Z}_1^{(\alpha, \lambda)}$}}\\t<s}} e^{-t-x} W_{(t,x)}.
$$
Then we have the following identity in law
\begin{align*} 
W^{(\alpha, \lambda)} &\stackrel{(\textrm{d})}{=} \sum_{\substack{{\textrm{$(t,x)$ atom of $\mathbf{Z}_1^{(\alpha, \lambda)}$}}\\t\geqslant s}} e^{-t-x} W_{(t,x)}+A_s.
\end{align*}
Plainly, we also have
\begin{align} \label{eq:sd1}
W^{(\alpha, \lambda)} &\stackrel{(\textrm{d})}{=} e^{-s}\sum_{\substack{{\textrm{$(t,x)$ atom of $\mathbf{Z}_1^{(\alpha, \lambda)}$}}\\t\geqslant s}} e^{-(t-s)-x} W_{(t,x)}+A_s.
\end{align}
By the construction (\ref{eq:Poisson}) of $\mathbf{Z}_1^{(\alpha, \lambda)}$ via point process $\mathbf{N}$ and the fact that the law of Poisson point process $\mathbf{N}$ is invariant under mapping $(t, x) \mapsto (t-s, x)$, 
\begin{align} \label{eq:sd2}
W^{(\alpha, \lambda)} \stackrel{(\textrm{d})}{=} \sum_{\substack{{\textrm{$(t,x)$ atom of $\mathbf{Z}_1^{(\alpha, \lambda)}$}}\\t\geqslant 0}} e^{-t-x} W_{(t,x)} \stackrel{(\textrm{d})}{=}  \sum_{\substack{{\textrm{$(t,x)$ atom of $\mathbf{Z}_1^{(\alpha, \lambda)}$}}\\t\geqslant s}} e^{-(t-s)-x} W_{(t,x)}.
\end{align}
Combining distributional equations (\ref{eq:sd1}) and (\ref{eq:sd2}) gives us that
\begin{align*}
W^{(\alpha, \lambda)} \stackrel{(\textrm{d})}{=} e^{-s} W^{(\alpha, \lambda)} +A_s.
\end{align*}
This verifies that the law of $W^{(\alpha, \lambda)}$ is self-decomposable. The rest of the proof can be completed by Example 27.8 and Theorem 53.1 in Sato \cite{Sato1999}. 
\end{proof}

\begin{remark} \label{remark:infinitelydivisable}
Since self-decomposable laws are also infinitely divisible (Proposition 15.5 in \cite{Sato1999}), the laws of $W^{(\alpha, \lambda)}$ and $W$ are infinitely divisible on $\mathbb{R}_+$.
\end{remark}

Finally we can upgrade the results in Theorem \ref{theorem:simple} and give a precise description of asymptotic behaviors of the density function $f$ of $W$.

\begin{proof}[Proof of Theorem \ref{prop:density}] The first part of this theorem follows directly from Proposition \ref{prop:sd}. To prove the second part concerning the asymptotic behaviors of density $f$, first recall that we have shown in Theorem \ref{theorem:simple}(ii) that
$$
- \log \mathbb{P} ( W > x ) \sim r^* x, \quad x \rightarrow \infty.
$$
Since $f$ is non-increasing on some neighbourhood of infinity, we can apply Theorem 4.12.10(i) of Bingham et al. \cite{BGT87} to conclude that
$$
- \log f(x) \sim r^{*}x, \quad x \rightarrow \infty.
$$
On the other hand, from Theorem \ref{theorem:simple}(iii), 
$$
-\log \mathbb{P}(W<x) \sim \frac{1}{2} (\log x)^2, \quad x \rightarrow 0+.
$$
Since $f$ is non-decreasing on some neighbourhood of $0$, for $x$ small enough,
$$
-\log \left(xf(x)\right) \leq  -\log \int_0^x f(u) du \leq -\log \mathbb{P}(W<x)= \frac{1}{2}(\log x)^2 (1+o(1)).
$$
This gives us the upper bound that
$$
\limsup_{x \rightarrow 0+} \frac{-\log f(x)}{\frac{1}{2} (\log x)^2} \leq 1.
$$
For any $c>1$ and $x$ small enough, we also have
$$
-\log\left((c-1)xf(x)\right) \geqslant -\log \int_x^{cx} f(u)du \geqslant -\log \int_0^{cx} f(u)du = \frac{1}{2} \left(\log(cx) \right)^2 (1+o(1)).
$$
Dividing both sides of the above by $\frac{1}{2}(\log x)^2$ and sending $x \rightarrow 0+$ gives us the desired lower bound, which completes the proof.
\end{proof}

\begin{remark} \label{rk:Yule}
Consider standard Yule process $(Y_t)_{t \geqslant 0}$ with additive martingale $(M_t)_{t \geqslant 0}$ defined by
$$
M_t = \frac{Y_t}{\mathbb{E}(Y_t)}.
$$
It is known that $(M_t)_{t \geqslant 0}$ converges to non-degenerate terminal value $M$. Now we adapt our proof of Theorem \ref{theorem:simple} to show that $M$ follows the standard exponential equation. 

Consider point process $\tilde{\mathbf{Z}}$ whose atoms are given by the locations of the first generation of the Yule process $(Y_t)_{t\geqslant 0}$. Note that $\tilde{\mathbf{Z}}$ is a Poisson point process on $\mathbb{R}_+$ with intensity measure given by the Lebesgue measure on $\mathbb{R}_+$. Let $(\tilde{\mathbf{Z}}_n)_{n\geqslant 0}$ be a branching random walk on $\mathbb{R}_+$ started from the origin such that the first step $\tilde{\mathbf{Z}}_1$ is given by $\tilde{\mathbf{Z}}$. The additive martingale $(\tilde W_n)_{n \geqslant 0}$ given by
$$
\tilde W_n = \int_0^{\infty} e^{-x} \tilde{\mathbf{Z}}_n(dx)
$$
converges almost surely and in $L^1$ to its terminal value $\tilde W$. Similar to Proposition \ref{prop:coincide}, we can show in a similar procedure that 
$
M = \tilde W
$
almost surely. 

Now let $\tilde \phi$ be the Laplace transform of $\tilde W$. Since $\tilde W$ is a fixed point of smoothing transform and $\tilde{\mathbf{Z}}_1$ is a Poisson point process, we have 
$$
\log \tilde \phi(r) =\log  \mathbb{E} \left( \int_0^{\infty}  \tilde \phi(re^{-x}) \tilde{\mathbf{Z}}_1(dx) \right) = \int_0^{\infty} \left(\tilde \phi(re^{-x}) -1 \right) dx.
$$
Together with the initial condition that $(\tilde \phi)'(0)=1$, the above equation has a unique solution given by
$$
\tilde \phi (r) = \frac{1}{1+r}.
$$
From this, we conclude that terminal value $M$ follows standard exponential equation.
\end{remark}


\printbibliography

\end{document}